\definecolor{red}{rgb}{1,0,0}
\newtheorem{thm}[subsection]{Theorem}
\newtheorem{defn}[subsection]{Definition}
\newtheorem{prop}[subsection]{Proposition}
\theoremstyle{definition}  
\newcommand{\F}{\mathbb{F}}
\numberwithin{equation}{subsection}
\DeclareFixedFont{\ttb}{T1}{txtt}{bx}{n}{12} 
\DeclareFixedFont{\ttm}{T1}{txtt}{m}{n}{12}  
\definecolor{deepblue}{rgb}{0,0,0.5}
\definecolor{deepred}{rgb}{0.6,0,0}
\definecolor{deepgreen}{rgb}{0,0.5,0}
\newcommand\pythonstyle{\lstset{
language=Python,
basicstyle=\ttm,
otherkeywords={self},             
keywordstyle=\ttb\color{deepblue},
emph={MyClass,__init__},          
emphstyle=\ttb\color{deepred},    
stringstyle=\color{deepgreen},
frame=tb,                         
showstringspaces=false            %
}}
\newcommand\pythoninline[1]{{\pythonstyle\lstinline!#1!}}
\begin{document}

\title[a-number of hyperelliptic curves]{The a-number of hyperelliptic curves}
\author{Sarah Frei}

\maketitle

\begin{abstract}
It is known that for a smooth hyperelliptic curve to have a large $a$-number, the genus must be small relative to the characteristic of the field, $p>0$,  over which the curve is defined. It was proven by Elkin that for a genus $g$ hyperelliptic curve $C$ to have $a_C=g-1$, the genus is bounded by $g<\frac{3p}{2}$. In this paper, we show that this bound can be lowered to $g <p$. The method of proof is to force the Cartier-Manin matrix to have rank one and examine what restrictions that places on the affine equation defining the hyperelliptic curve. We then use this bound to summarize what is known about the existence of such curves when $p=3,5$ and $7$.
\end{abstract}

\section{Introduction}
\label{intro}

Associated to an algebraic curve defined over a field of positive characteristic $p$ are a number of invariants used to better understand the structure of the curve, such as $p$-rank, Newton polygon, Ekedahl-Oort type, and $a$-number. Knowing if and when certain properties of a curve exist gives information about the moduli space of smooth projective curves of genus $g$ over a field $k$. Studied here is the $a$-number of hyperelliptic curves of genus $g$. The $a$-number $a_C$ of a hyperelliptic curve $C$ defined over an algebraically closed field $k$ of characteristic $p>0$ is $a_C=\text{dim}_k\text{Hom}(\alpha_p,\text{Jac}(C)[p])$, where $\alpha_p$ is the kernel of the Frobenius endomorphism on the additive group scheme $\mathbb{G}_a$. While the $a$-number of a curve is easily computible, there are still many open questions about this invariant.

For an algebraic curve of genus $g$ defined over $\mathbb{C}$, its Jacobian will have $p^{2g}$ $p$-torsion points. However, for a curve in characteristic $p$, the number of $p$-torsion points drops to $p^{f_C}$, where \linebreak$0 \le f_C \le g$. We define $f_C$ to be the $p$-rank of the curve. A generic curve of genus $g$ will have $f_C=g$. It must also be that the $a$-number is bounded above by $g-f_C$, so a typical curve of genus $g$ will have $a_C=0$. This means curves with larger $a$-numbers do not occur as often, and in fact curves with $a_C=g$ are very rare. An algebraic curve with $a_C=g$, called a superspecial curve, has the property that its Jacobian is isomorphic to a product of supersingular elliptic curves \cite{oort1975}. Because superspecial curves are as far from ordinary as possible, they are a popular topic for research.

For a curve to have a large $a$-number, the genus of that curve must be small relative to the characteristic $p>0$ of the field over which the curve is defined. It is a result of Ekedahl \cite{ekedahl1987} that for any curve with $a_C=g$, the genus is bounded by $g \le \displaystyle\frac{p(p-1)}{2}$. If the curve is hyperelliptic and $a_C=g$, then $g \le \displaystyle\frac{p-1}{2}$. 

If superspecial curves occur the least, then the next most infrequently occurring type of curve should be one with $a_C=g-1$. The next question that can be asked then is what kind of bound exists on the genus when $a_C=g-1$, and for any known bound, is that bound attained? It should be that the genus must still be small relative to the characteristic of the field. For a curve with $a_C=g-1$, it was shown by Re \cite{re2001} that $g \le p^2$. In fact, Re's results were more general, giving the bound  $g\le (g-a_C+1)\displaystyle\frac{p(p-1)}{2}+p(g-a_C)$ on the genus of a curve with any $a$-number. 

Further results by Elkin \cite{elkin2011} show that for a hyperelliptic curve with $a_C=g-1$, the bound on the genus is even lower: $g < \displaystyle\frac{3p}{2}$. Elkin's bound was also proven more generally, showing that if $g-a \le \displaystyle\frac{2g}{p}-2$, then there are no hyperelliptic curves of genus $g$ with $a_C \ge a$. Work by Johnston \cite{johnston2007} confirms Elkin's bound of $g < \displaystyle\frac{3p}{2}$.

While these general results are useful, it is not clear whether the bound is optimal for a given $a$-number. The goal of this paper is to explore this bound when $a_C=g-1$ and show that it can be lowered even further. The following result is proven in Section \ref{results}.

\begin{thm}
Let $g \ge p$ where $p$ is an odd prime. Then there are no smooth hyperelliptic curves of genus $g$ defined over a field of characteristic $p$ with $a$-number equal to $g-1$.
\end{thm}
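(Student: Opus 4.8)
The plan is to reduce the statement to a rank condition on the Cartier--Manin matrix and then extract enough rigidity from that condition to contradict smoothness. I would write the curve affinely as $y^2 = f(x)$ with $f$ separable (this is equivalent to smoothness) of degree $2g+1$ or $2g+2$; since $k$ is algebraically closed, a fractional linear change of coordinate moves a root of $f$ to infinity, so I may assume $\deg f = 2g+1$. Expanding $f(x)^{(p-1)/2} = \sum_k c_k x^k$, the Cartier--Manin matrix is $M = (c_{pi-j})_{1 \le i,j \le g}$, and the $a$-number satisfies $a_C = g - \rank M$. Thus $a_C = g-1$ is equivalent to $\rank M = 1$, and it suffices to show that for $g \ge p$ no separable $f$ produces a Cartier--Manin matrix of rank exactly one.

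First I would record which coefficients of $h := f^{(p-1)/2}$ actually occur in $M$. The exponents $pi-j$ with $1 \le i,j \le g$ fill the interval $[\,p-g,\ gp-1\,]$, and the crucial dichotomy is that for $g<p$ consecutive rows contribute disjoint blocks of exponents separated by gaps, whereas for $g \ge p$ the blocks become contiguous and cover every exponent from $0$ up to $\deg h = gp - g + \tfrac{p-1}{2}$. Hence, precisely once $g \ge p$, every coefficient of $h$ is constrained by $M$, with no free coefficients left over; this is the structural reason the threshold sits at $g=p$, and it is what I would emphasize at the start of the analysis.

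Next I would exploit $\rank M = 1$. A nonzero matrix has rank one exactly when all $2\times 2$ minors vanish, equivalently $c_{pi-j} = u_i v_j$ for some vectors $u,v$. In the cleanest case $g=p$ the assignment $(i,j)\mapsto pi-j$ is a bijection onto $[0,p^2-1]$, so the outer-product structure globalizes to a factorization $x^{g}h(x) = W(x)^p\,V(x)$, where $\sum_i u_i x^{pi} = W(x)^p$ is a $p$-th power (Frobenius) and $\deg V \le g-1$. Now separability bites: each of the at least $2g$ nonzero roots of $f$ occurs in $h$ with multiplicity $\tfrac{p-1}{2} < p$, so it cannot be supplied by the $p$-th power $W^p$ and must be a root of $V$ of multiplicity $\tfrac{p-1}{2}$. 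Summing over the roots forces $\deg V \ge g(p-1)$, which contradicts $\deg V \le g-1$ for every odd prime $p$.

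For $g>p$ the same philosophy applies, but the combinatorics is where I expect the real difficulty to lie. Here $(i,j)\mapsto pi-j$ is no longer injective, so the clean factorization fails and is replaced by the overlap relations $u_i v_j = u_{i+1}v_{j+p}$ (valid whenever $j$ and $j+p$ lie in $[1,g]$). The hard part will be showing that these relations, together with the degree bound $\deg h = gp-g+\tfrac{p-1}{2}$, still pin down the support of $h$ tightly enough to force a repeated root of $f$. My plan is to propagate the multiplicative relations to show that $u$ and $v$ are rigidly (geometrically) determined, recover a factorization of $h$ as a $p$-th power times a low-degree factor, and then rerun the multiplicity count against separability. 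Combining the base case $g=p$ with this overlap analysis for $g>p$ would then yield the theorem.
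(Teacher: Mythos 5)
Your reduction to $\operatorname{rank}(M)=1$ and your argument for the case $g=p$ are correct: there $(i,j)\mapsto pi-j$ is a bijection from $[1,p]^2$ onto $[0,p^2-1]$, so every coefficient of $h=f^{(p-1)/2}$ is an entry of $M$, the outer-product form $c_{pi-j}=u_iv_j$ yields $x^gh(x)=W(x)^pV(x)$ with $\deg V\le g-1$, and the multiplicity count (each of the at least $2g$ nonzero roots of the squarefree $f$ has multiplicity $\tfrac{p-1}{2}<p$ in $x^gh$, hence must be a root of $V$, forcing $\deg V\ge g(p-1)>g-1$) gives a genuine contradiction. This is in fact a cleaner route than the paper's own treatment of $g=p$, which normalizes $c_0=0$, $c_1\ne 0$, $c_{2g+1}=1$, argues column by column that only the middle column of $M$ can be nonzero, and concludes $f^{(p-1)/2}=x^{(p-1)/2}\widetilde{h}(x)^p$ before running essentially the same multiplicity argument; your version needs no normalization and no column analysis.

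The genuine gap is the case $g>p$, which you leave as a plan rather than a proof, and the plan as stated hits a real obstruction. Once $(i,j)\mapsto pi-j$ fails to be injective, the product $\bigl(\sum_i u_ix^{pi}\bigr)\bigl(\sum_j v_jx^{g-j}\bigr)$ has coefficient $n_kc_k$ at $x^{g+k}$, where $n_k$ is the number of representations $k=pi-j$ with $1\le i,j\le g$; since the $n_k$ are not all $1$, no identity $x^gh=W^pV$ follows, and ``propagating the overlap relations $u_iv_j=u_{i+1}v_{j+p}$ to rigidly determine $u$ and $v$'' is exactly the step you have not carried out. Ironically, you have the difficulty backwards: $g>p$ is the easy case, and your own rank-one formalism dispatches it in a few lines. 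Normalize $f(0)=0$ and $f$ monic of degree $2g+1$. Then $c_{(p-1)/2}=u_1v_{(p+1)/2}=c_1^{(p-1)/2}$ and $c_{\deg h}=u_gv_{g-(p-1)/2}=1$, while $u_gv_{(p+1)/2}=c_{pg-(p+1)/2}=0$ because $pg-\tfrac{p+1}{2}>\deg h=pg-g+\tfrac{p-1}{2}$ precisely when $g>p$. If $c_1\ne 0$ this is impossible: $u_1v_{(p+1)/2}\ne 0$ and $u_gv_{g-(p-1)/2}\ne 0$ force $v_{(p+1)/2}\ne 0$ and $u_g\ne 0$, hence $u_gv_{(p+1)/2}\ne 0$. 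So $c_1=0$, and with $c_0=0$ this gives $x^2\mid f$, contradicting smoothness. (This is the paper's Case 1, phrased there as the vanishing of the $2\times 2$ minor on rows $1,g$ and columns $\tfrac{p+1}{2}$ and $\tfrac{p+1}{2}+(g-p)$.) With such a paragraph supplied for $g>p$, your proof would be complete.
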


These results show that for a hyperelliptic curve with $a=g-1$, the bound on the genus is even lower than was previously known. We must actually have $g < p$ for such a curve to exist. Section \ref{smallp} summarizes what this bound looks like for small fields. 

Based on computations for $p=5$, $p=7$ and $p=11$, it seems possible that this bound may be even lower when $p>3$. When $g=p-1$, for a genus $g$ hyperelliptic curve to have $a=g-1$ its affine equation $y^2=f(x)$ must take on a particular form. This is discussed in Section \ref{openquestion}. 

\subsection*{Acknowledgments}
I would like to thank my advisor Rachel Pries for her many helpful comments and suggestions on this paper, as well as for guiding me on this project while I was a graduate student at Colorado State University.

\section{Background Information}

%

\subsection{The Cartier Operator}
Let $K=k(x,y)$ be the algebraic function field of a hyperelliptic curve $C$ given by $y^2=f(x)$, and let $d:K\rightarrow \Omega^1(K)$ be the canonical derivation of elements in $K$. For a holomorphic 1-form $\omega \in H^0(C,\Omega^1_C)$, we can write it as $\omega=d\phi + \eta^px^{p-1}dx$ with $\phi,\eta \in K$.

\begin{defn}
The modified Cartier operator $C':H^0(C,\Omega^1_C) \rightarrow H^0(C,\Omega^1_C)$ is defined for $\omega$ given as above by $C'(\omega)=\eta dx$.
\end{defn}

%

\noindent For a full discussion on the Cartier operator as well as the modified Cartier operator, see \cite{yui1978}.

A canonical basis for $H^0(C,\Omega^1_C)$ is given by
$$\left\{\omega_i=\displaystyle\frac{x^{i-1}dx}{y} : 1\le i \le g \right\}.$$
We want to consider what the modified Cartier operator does to these basis elements. Recall that $C$ is given by $y^2=f(x)$, and if we let $f(x)^{(p-1)/2}=\displaystyle\sum_{j=0}^N \kappa_jx^j $ where $N=\displaystyle \frac{p-1}{2} (2g+1)$, then we can rewrite $\omega_i$ as follows:

\begin{align*}
\omega_i=& x^{i-1}y^{-p}y^{p-1}dx=y^{-p}x^{i-1}\displaystyle\sum_{j=0}^N \kappa_jx^j dx \\
 =& y^{-p}\left( \displaystyle\sum_{\substack{j \\ i+j \not\equiv 0 (\text{mod } p)}} \kappa_jx^{i+j-1}dx \right) + \displaystyle\sum_{l} \kappa_{(l+1)p-i} \displaystyle\frac{x^{lp}}{y^p}x^{p-1}dx.
\end{align*}

The highest possible power of $x$ is $N+i-1$, so $lp+p-1 \le N+i-1$, which forces 
$$0 \le l \le \displaystyle\frac{N+i}{p}-1=g-\displaystyle\frac{1}{2}-\left(\displaystyle\frac{2g-2i+1}{2p}\right) < g-\displaystyle\frac{1}{2}.$$

This means the sum in the second term is over $0\le l \le g-1$. Thus we can now see that 
$$C'(\omega_i)=\displaystyle\sum_{l=0}^{g-1} \kappa_{(l+1)p-i}^{1/p} \displaystyle\frac{x^l}{y}dx.$$

This shows that $C'$ is a map on $H^0(C,\Omega^1_C)$ and we can represent its action on the basis with a matrix. If we write $\bar{\omega}=(\omega_1,...,\omega_g)$, then 
$$C'(\bar{\omega})=A^{(1/p)}\bar{\omega}$$
where $A$ is a $g\times g$ matrix $[a_{ij}]$ with $a_{ij}=\kappa_{pi-j}$.

\begin{defn}
The matrix $A$ described above is the Cartier-Manin matrix of the hyperelliptic curve $C$ of genus $g$ defined over $k$.
\end{defn}

\subsection{P-Rank and A-Number}

The group scheme $\mu_p\cong \text{Spec}(k[x]/(x-1)^p)$ is the kernel of the Frobenius endomorphism on the multiplicative group $\mathbb{G}_m=\text{Spec}(k[x,x^{-1}])$. The group scheme $\alpha_p\cong \text{Spec}(k[x]/x^p)$ is the kernel of the Frobenius endomorphism on the additive group \linebreak$\mathbb{G}_a=\text{Spec}(k[x])$. For more on group schemes, see \cite{tate1997}.

The $p$-rank of a hyperelliptic curve $C$ is $f_C=\text{dim}_k\text{Hom}(\mu_p,\text{Jac}(C)[p])$.  An equivalent definition of the $p$-rank is that it is the positive integer $f_C$ such that Jac$(C)[p](k)\cong (\mathbb{Z}/p\mathbb{Z})^{f_C}$, so $\#\text{Jac}(C)[p](k)=p^{f_C}$. We see that $0 \le f_C \le g=\text{dim}(\text{Jac}(C))$. A curve is called ordinary if $f_C=g$, and non-ordinary otherwise.

The $a$-number of $C$ is $a_C=\text{dim}_k\text{Hom}(\alpha_p,\text{Jac}(C)[p])$. We also have $0\le a_C \le g$, and in fact $a_C \le g-f_C$. Curves with $a_C=g$ are called superspecial and do not occur often, due to the fact that a typical curve of genus $g$ has $f_C=g$. Curves with $a_C=g-1$ are forced to have $f_C=0$ or $f_C=1$ which limits their occurrences.

The $a$-number is also related to the rank of the Cartier-Manin matrix introduced above. For an abelian variety $X$ of dimension $g$, such as the Jacobian of a genus $g$ hyperelliptic curve, the Frobenius operator $F:X\rightarrow X^{(p)}$ is the $p$-th power map on $X$, and the Verschiebung operator $V:X^{(p)} \rightarrow X$ is the map such that $V \circ F = [p]$, the multiplication-by-$p$ map. The $a$-number is also defined \cite{li1998} as the dimension of the kernel of the action of $V$ on $H^0(X,\Omega^1_X)$. If we let $v=\text{dim}VH^0(X,\Omega^1_X)$, this gives us that $a_C=g-v$. It is also known for a smooth projective curve $C$, such as a hyperelliptic curve, that the action of the Cartier operator on $H^0(C,\Omega^1_C)$ agrees with the action of $V$ on $H^0(\text{Jac}(C),\Omega^1_{\text{Jac}(C)})\cong H^0(C,\Omega^1_C)$ \cite{oda1969}. Since we can express the action of the Cartier operator on $H^0(C,\Omega^1_C)$ with the Cartier-Manin matrix $A$, we see that $a_C=g-\text{rank}(A)$. 

It turns out that associated with any abelian variety $X$ of dimension $g$ is a short exact sequence
$$0 \rightarrow H^0(X,\Omega^1_{X})\rightarrow H^1_{dR}(X) \rightarrow H^0(X,\Omega^1_{X})\rightarrow 0.$$
The Frobenius operator acts on $H^0(X,\Omega^1_X)$ in this sequence, and the Verschiebung operator acts on $H^1_{dR}(X)$ so $H^0(X,\Omega^1_X)=VH^1_{dR}(X)$. 

For the sake of notation, we will let $a_C=a$ for the rest of this paper. In studying hyperelliptic curves with $a=g-1$, we will thus be looking for curves with a Cartier-Manin matrix of rank one. We will utilize the fact that for a matrix of rank 1, there is at least one non-zero entry, and every $2\times 2$ minor has determinant 0. 

\section{Results} \label{results}

In this section we will use the following notation. Let $C$ be a hyperelliptic curve given by the equation $y^2=f(x)$ where $f(x)=\displaystyle\sum_{i=1}^{2g+1}c_ix^i$ with $c_i \in k$ where $k$ is an algebraically closed field of characteristic $p>0$. Note that by a change of variables, we can assume $c_0=0$ and $c_{2g+1}=1$. We will assume that $C$ has $a=g-1$. Then we will define the coefficients $\kappa_i$ as follows:
$$f(x)^{(p-1)/2} = \displaystyle\sum_{i=0}^{\left(\frac{p-1}{2}\right)(2g+1)}\kappa_ix^i$$
and $\kappa_i=0$ if $i<\frac{p-1}{2}$ or $i>(\frac{p-1}{2})(2g+1)$. The Cartier-Manin matrix $A$ associated to $C$ is the $g \times g$ matrix $[a_{ij}]$ where $a_{ij}=\kappa_{pi-j}$. We will denote row $m$ of $A$ by $A_m$. For $C$ to have $a$-number equal to $g-1$, $A$ must have rank one.

\begin{thm}
\label{mainthm}
Let $g \geq p$ where $p$ is an odd prime. Then there are no smooth hyperelliptic curves of genus $g$ defined over an algebraically closed field of characteristic $p$ with $a$-number equal to $g-1$.
\end{thm}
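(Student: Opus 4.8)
The plan is to show that the rank-one condition on the Cartier-Manin matrix $A$ is simply too rigid to be satisfied when $g \geq p$. The central observation is that the entries $a_{ij} = \kappa_{pi-j}$ are not arbitrary: they are coefficients of the single polynomial $f(x)^{(p-1)/2}$, so the various entries of $A$ are heavily constrained by polynomial identities among the $\kappa_i$. My approach is to assume for contradiction that such a curve exists, so that $A$ has rank exactly one, and then extract enough linear relations among the $\kappa_i$ to force a contradiction. First I would locate a guaranteed nonzero entry of $A$; since $\mathrm{rank}(A)=1$, some entry $a_{IJ}=\kappa_{pI-J}\neq 0$. Every $2\times 2$ minor vanishes, which says that any two rows are scalar multiples of one another, and likewise for columns. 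Concretely, for the rows, $A_m = \lambda_m A_I$ for scalars $\lambda_m$, giving the identities $\kappa_{pm-j} = \lambda_m \kappa_{pI-j}$ for all $j$ and all $m$.

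**Exploiting the range constraints on the $\kappa_i$.**

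The key leverage comes from the fact that $\kappa_i = 0$ unless $\tfrac{p-1}{2} \le i \le \tfrac{p-1}{2}(2g+1)$, combined with the indexing $a_{ij}=\kappa_{pi-j}$ where $1 \le i,j \le g$. I would track carefully which entries in each row are forced to vanish by this support condition, and which are allowed to be nonzero. The indices $pi - j$ range over a window of width $g-1$ in each row, and as $i$ increases by one the window shifts by $p$. When $g \ge p$, these windows overlap or abut in a way that lets me compare the same coefficient $\kappa_n$ as it appears in different rows. Setting two such expressions equal using the scalar relations $\kappa_{pm-j}=\lambda_m\kappa_{pI-j}$ should propagate the proportionality constants and eventually force a chain of the $\kappa_i$ to vanish, or force $f$ to degenerate. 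In particular I expect to show that the nonzero entries of $A$ can only be distributed along a single row or a single column in a very restricted pattern, and that when $g\ge p$ this pattern cannot be realized while keeping $f$ of the correct degree $2g+1$ with $c_{2g+1}=1$.

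**The main obstacle and how to handle it.**

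The hard part will be converting the scalar proportionality relations into genuine polynomial obstructions on $f$ itself, rather than just statements about isolated coefficients $\kappa_i$. The difficulty is that the $\kappa_i$ are the coefficients of a $\tfrac{p-1}{2}$-th power, so they satisfy nonlinear relations, and I must show that the linear rank-one relations are incompatible with this nonlinear structure precisely in the regime $g \ge p$. I anticipate that the cleanest route is to examine the top-degree and bottom-degree behavior: the leading coefficient $\kappa_N$ with $N=\tfrac{p-1}{2}(2g+1)$ equals $c_{2g+1}^{(p-1)/2}=1$, so the entry $a_{g,j}$ with $pg-j=N$ is forced to be nonzero, which pins down the nonzero row or column, while the lowest surviving coefficient $\kappa_{(p-1)/2}=c_1^{(p-1)/2}$ controls the opposite corner. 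Balancing these two anchored nonzero entries against the requirement that all intervening $2\times 2$ minors vanish should be where the inequality $g \ge p$ becomes essential. Once the contradiction is visible at the level of the $\kappa_i$, I would translate it back by noting that the vanishing forced on the $\kappa_i$ would require $f(x)^{(p-1)/2}$, and hence $f(x)$, to have impossibly few terms or a forbidden form, contradicting $\deg f = 2g+1$ and smoothness of $C$.
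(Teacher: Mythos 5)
Your plan is essentially the paper's argument in the regime $g>p$, and there it works: with $g=p+m$, $m\ge 1$, the two anchors you identify are $a_{1,(p+1)/2}=\kappa_{(p-1)/2}=c_1^{(p-1)/2}$ and $a_{g,(p+1)/2+m}=\kappa_{N}=1$, and the support condition forces $a_{1,(p+1)/2+m}=0$ and $a_{g,(p+1)/2}=0$; the single $2\times 2$ minor on rows $1,g$ and columns $(p+1)/2,(p+1)/2+m$ then gives $c_1^{(p-1)/2}\cdot 1-0\cdot 0=0$, so $c_1=0$, $x^2\mid f$, and $C$ is singular. No long propagation of proportionality constants is needed; one well-chosen minor suffices.

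However, there is a genuine gap at the boundary case $g=p$, which your sketch does not treat and which your mechanism cannot reach. Your comparison device (``the windows overlap or abut in a way that lets me compare the same coefficient $\kappa_n$ as it appears in different rows'') only operates when the row windows actually overlap, i.e.\ when $g\ge p+1$; for $g=p$ the windows $\{pi-g,\dots,pi-1\}$ exactly abut, every $\kappa_n$ with $0\le n\le p^2-1$ occurs exactly once in $A$, and no coefficient can be compared across rows. Worse, your two anchors then sit in the \emph{same} column, since $g-\tfrac{p-1}{2}=\tfrac{p+1}{2}$ when $g=p$, so there is no $2\times 2$ minor pairing them against forced zeros, and the argument above degenerates. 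What rank one does yield here (assuming $c_1\ne 0$, else we are done) is that every column other than the middle one vanishes, i.e.\ $f(x)^{(p-1)/2}=x^{(p-1)/2}h(x^p)$. But this is a support condition on $f^{(p-1)/2}$, not on $f$: it does not force $f$ to have few terms or the wrong degree, so your closing step ``impossibly few terms\dots contradicting $\deg f=2g+1$'' cannot finish the proof. The missing idea is to invoke Frobenius in characteristic $p$ to write $h(x^p)=\tilde{h}(x)^p$, so that with $f=x\tilde{f}$ one gets $\tilde{f}^{(p-1)/2}=\tilde{h}^p$; any root of the nonconstant polynomial $\tilde{h}$ then has multiplicity at least $p>\tfrac{p-1}{2}$ in $\tilde{f}^{(p-1)/2}$, hence is a multiple root of $\tilde{f}$, so $f$ is not squarefree and $C$ is singular. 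Without this separate argument the case $g=p$ remains open, and you would only have reproved the statement for $g>p$.
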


\begin{proof}
We will proceed by considering two separate cases: first when $g>p$ and then when $g=p$.

{\bf Case 1:} Let $g>p$ where $p$ is an odd prime. We consider the entries $a_{i,j}=\kappa_{pi-j}$ of the Cartier-Manin matrix $A$.
Since $\kappa_i=0$ for $0 \le i \le \frac{p-3}{2}$, $a_{1,j}$ is possibly nonzero for $1 \le j \le \frac{p+1}{2}$, and $a_{1,j}=0$ for $\frac{p+3}{2} \le j \le g$. 
The largest nonzero term of $f(x)^{(p-1)/2}$ is $x^{g(p-1)+(p-1)/2}$, so $\kappa_{g(p-1)+(p-1)/2}=\kappa_{gp-(g-(p-1)/2)}=1$ and any larger-indexed coefficient is zero. This means $a_{g,j}=0$ for $1 \le j \le g-\frac{p+1}{2}$, and $a_{g,j}$ is possibly nonzero for $g-\frac{p-1}{2} \le j \le g$.

Now let us suppose that $g=p+m$ for some integer $m \ge 1$. We have 
$$a_{1,(p+1)/2}=\kappa_{(p-1)/2}=c_1^{(p-1)/2},$$
and $a_{1,(p+1)/2+m}=0$, since $a_{1,(p+1)/2}$ is the last nonzero entry in $A_1$. Also, $a_{g,(p+1)/2}=0$, since $a_{g,j}=0$ for $1 \le j \le g-\frac{p+1}{2}=\frac{p-1}{2}+m$ and $m \ge 1$. Hence $a_{g,(p+1)/2}$ is possibly the last zero term in $A_g$, if $m=1$. Lastly, $a_{g,(p+1)/2+m}=1$, since $g-\frac{p-1}{2}=p+m+\frac{p-1}{2}=\frac{p+1}{2}+m$, which is the first non-zero term in $A_g$. Using this $2\times 2$ minor, we get $$a_{1,(p+1)/2} \cdot a_{g,(p+1)/2+m} - a_{g,(p+1)/2} \cdot a_{1,(p+1)/2+m} =0,$$ which means $c_1^{(p-1)/2}\cdot 1 - 0 \cdot 0=0$. This forces $c_1=0$.  But then $f(x)=\displaystyle\sum_{i=2}^{2g+1} c_ix^i=x^2\displaystyle\sum_{i=2}^{2g+1}c_ix^{i-2}$ is not squarefree and $C$ is not a smooth curve. Therefore, when $g>p$ there are no smooth hyperelliptic curves of genus $g$ defined over a field of characteristic $p$ with $a$-number equal to $g-1$.

{\bf Case 2: } Let $g=p$ where $p$ is an odd prime. We again consider the $a_{i,j}$ in the Cartier-Manin matrix. There will be $g-\frac{p+1}{2}$ zeros in $A_1$ and $A_g$. For $g=p$, this means the last $\frac{p-1}{2}$ entries of $A_1$ are zeros and the first $\frac{p-1}{2}$ entries of $A_g$ are zeros. As above, $\kappa_{\frac{p-1}{2}}=c_1^{(p-1)/2}$ and $\kappa_{(2g+1)(p-1)/2}=\kappa_{(2p^2-p-1)/2} =1$. We will assume $c_1 \neq 0$ so that $C$ is not singular at $x=0$. This gives us an idea of what $A$ looks like:

\[ \left( \begin{array}{ccccccc}
\kappa_{\frac{p-1}{2}+\frac{p-1}{2}} & \ldots & \kappa_{\frac{p-1}{2}+1} & c_1^{(p-1)/2} & 0 & \ldots & 0 \\
 & \ldots &  & \kappa_{\frac{p-1}{2}+p} & \kappa_{\frac{p-1}{2}+(p-1)} & \ldots & \kappa_{\frac{p-1}{2}+\frac{p+1}{2}} \\
\vdots & \ddots & \vdots & \vdots & \vdots & \ddots  & \vdots \\
\kappa_{\frac{2p^2-p-1}{2}-\frac{p+1}{2}} & \ldots & \kappa_{\frac{2p^2-p-1}{2}-(p-1)} & \kappa_{\frac{2p^2-p-1}{2}-p} & &  \ldots &  \\
0 & \ldots & 0 & 1 & \kappa_{\frac{2p^2-p-1}{2} -1} & \ldots & \kappa_{\frac{2p^2-p-1}{2}-\frac{p-1}{2}} \\
\end{array} \right) \]

We can again consider the $2\times 2$ minors of $A$, or we can simply use the fact that because rk$A=1$, every column of $A$ is a scalar multiple of the middle column. The columns to the left of the middle column must be zero since the last entry of the index $\frac{p-1}{2}$ column is 1 while the last entry of the previous columns is zero. The columns to the right of the middle column must also be zero since the first entry of the index $\frac{p-1}{2}$ column is $c_1^{(p-1)/2}\neq 0$ while the first entry of the following columns is zero. This means $f(x)^{(p-1)/2}$ has the following form:
$$f(x)^{(p-1)/2} = \sum_{i=0}^{p-1} \kappa_{\frac{p-1}{2}+ip}x^{(p-1)/2+ip}=x^{(p-1)/2}h(x^p)=x^{(p-1)/2}\widetilde{h}(x)^p,$$
where $h(x)=\sum_{i=0}^{p-1}\kappa_{\frac{p-1}{2}+ip}x^{i}$ and where the last equality is a consequence of the multinomial theorem in characteristic $p>0$. Thus, since $f(x)=x(c_1+c_2x+...+x^{2p})=x\widetilde{f}(x)$, we see that $\widetilde{f}(x)^{(p-1)/2}=\widetilde{h}(x)^p$. Then we see that any root of $\widetilde{h}$ is a root of $\widetilde{f}^{(p-1)/2}$ with multiplicity $p$, making it a root of $\widetilde{f}$ with multiplicity greater than 1. Thus $f$ is not squarefree and hence $C$ is a singular hyperelliptic curve. Therefore, when $g=p$ there are no smooth hyperelliptic curves of genus $g$ defined over a field of characteristic $p$ with $a$-number equal to $g-1$.
\end{proof}

\section{Computations and Examples for Small Primes}
\label{smallp}

\subsection{For $p=3$} 
\label{peq3}
We see from Elkin's bound that hyperelliptic curves defined over $\overline{\mathbb{F}}_3$ with $a=g-1$ will only occur when $g< 5$. By Theorem \ref{mainthm}, in fact such a curve will only occur for $g<3$. Genus $3$ hyperelliptic curves have been studied extensively, and it was previously known that curves with $a=2$ do not exist \cite{elkin2007}. It is also known that genus 2 hyperelliptic curves with $a=1$ exist for all $p \geq 3$. Hence for $p=3$, genus 2 hyperelliptic curves are the only hyperelliptic curves with $a=g-1$.

\subsection{For $p=5$}
\label{peq5}
According to Elkin's bound, hyperelliptic curves with $a=g-1$ will only occur when $g < \frac{15}{2}$. For $p=5$ it is known that such hyperelliptic curves exist with genus 2 and with genus 3 \cite{elkin2007}. When $g=3$, they in fact occur with both $p$-rank 0 and 1.

It is next worth investigating $g=4, 5, 6$, and $7$, but Theorem \ref{mainthm} in Section \ref{results} shows that for $g=5,6$ and $7$, there are no smooth hyperelliptic curves of such a genus with $a=g-1$. It can be shown that if we assume $C$ is a genus 4 hyperelliptic curve with $a=3$ defined by $y^2=f(x)$, then $f(x)=x(x+2c_8)^3(x+\sqrt[5]{c_4})^5$. This means there are no smooth hyperelliptic curves of $g=4$ with $a=3$ defined over a field of characteristic 5. Hence, the case $p=5$ is completely determined, with curves having $a=g-1$ only existing when $g=2$ and $g=3$.

\subsection{For $p=7$}
\label{peq7}
Elkin's bound for $p=7$ gives that for a hyperelliptic curve with $a=g-1$, we must have $g<\frac{21}{2}$, so we are interested in looking for curves with genus up to 10. Theorem \ref{mainthm} shows that such a curve will not exist with $g\ge p$, so in fact we only need to study $g=2,3,4,5$ and 6. It was previously shown that genus 2 curves exist with $a=1$ in characteristic 7.

Hyperelliptic curves of genus 3 with $a=2$ exist, and as occurs for $p=5$, they exist with $p$-rank both 0 and 1. In this case, as expected, there are far more such curves with $p$-rank 1 than $p$-rank 0 defined over $\mathbb{F}_7$.

It is still unknown whether or not curves of genus $4$ exist with $a=3$. The Sage code shown in Section \ref{Sagecode} was used to determine that genus 4 hyperelliptic curves with a defining polynomial of the form $f(x)=c_1x+c_2x^2+...+c_8x^8+x^9$ do not exist over $\mathbb{F}_7$. We note that there could still exist a curve with either $c_0\neq 0$, $c_9\neq 1$ or $c_{10}\neq 0$ and the desired $a=3$ defined over $\mathbb{F}_7$, so this was not an exhaustive search. After checking $1,000,000$ hyperelliptic curves defined over $\mathbb{F}_{49}$ with branch points fixed at $x=0, 1$ and $\infty$, none were found to have $a=3$. This code can also be seen in Section \ref{Sagecode}. However, this is a very small portion of the total number of curves defined over $\mathbb{F}_{49}$, and it is possible that such a curve could exist over a larger extension of $\mathbb{F}_7$.


When $g=5$, we see similar results. It is still open whether or not curves of genus 5 exist with $a=4$. It has been checked in Sage that there are no such hyperelliptic curves over $\mathbb{F}_7$ with defining polynomial of the form $f(x)=c_1x+...+c_{10}x^{10}+x^{11}$ (again, a non-exhaustive search). We next checked for curves branched at 0 and $\infty$ defined over $\mathbb{F}_{49}$. In this case, we use information from the Cartier-Manin matrix, again forcing the matrix to have rank one, to further shrink the search space. After checking 30,000,000 random curves under these restrictions, none were found to have $a=4$. We note again that this is only a small portion of the curves defined over $\F_{49}$, and those checked were only curves in a restricted search space, since there could exist a genus 5 hyperelliptic curve defined over $\mathbb{F}_{49}$ with no rational ramification points having $a=4$. 

For genus 6 curves, it can be shown that if we assume $C$ is a genus 6 hyperelliptic curve with $a=5$ defined by $y^2=f(x)$, then $f(x)= x(x+3c_{12})^5(x+\sqrt[7]{c_6})^7$. Thus, there are no smooth hyperelliptic curves of genus 6 with $a=5$ when $p=7$.

\section{Further Lowering the Bound}
\label{openquestion}

Without any known examples of algebraic curves of genus $g>3$ with $a=g-1$, it is unclear whether or not it is possible to lower the bound on the genus any further. Future work in this area could include exploring the cases of $g=p-1$ and $g=p-2$. 

As stated in Sections \ref{peq5} and \ref{peq7}, neither smooth hyperelliptic curves of genus 4 with $a=3$ nor smooth hyperelliptic curves of genus 6 with $a=5$ exist when $p=5$ or $p=7$, respectively. It can also be shown that if we assume $C$ is a genus 10 hyperelliptic curve with $a=9$ defined by $y^2=f(x)$ in characteristic 11, then $f(x)=x(x+5c_{20})^9(x+\sqrt[11]{c_{10}})^{11}$, and hence $C$ is not smooth. These cases suggest that curves with $a=g-1$ likely do not exist when $g=p-1$. In fact, we have the following result.

\begin{prop}
\label{thmp-1}
Let $C$ be a hyperelliptic curve defined over a field of characteristic $p>3$ of genus $g=p-1$, where $C$ is defined above. If $C$ has $a=g-1$, then $f(x) \in k[x,c_g,c_{2g-g/2},c_{2g}]$.
\end{prop}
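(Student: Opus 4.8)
The plan is to imitate the column-elimination in the proof of Theorem~\ref{mainthm}, cut the Cartier--Manin matrix down to two columns, and then convert the resulting shape of $f(x)^{(p-1)/2}$ into a differential identity for $f$ whose only polynomial solutions have the desired form. First I would locate the extreme nonzero entries of $A_1$ and $A_g$ exactly as in Theorem~\ref{mainthm}: with $g=p-1$ one finds that the last possibly-nonzero entry of $A_1$ is $a_{1,(p+1)/2}=\kappa_{(p-1)/2}=c_1^{(p-1)/2}$ and the first possibly-nonzero entry of $A_g$ is $a_{g,(p-1)/2}=\kappa_{(2p-1)(p-1)/2}=1$. Since $\rank A=1$, every column is a scalar multiple of any nonzero column; comparing with row $1$ forces all columns with $j>(p+1)/2$ to vanish, and comparing with row $g$ forces all columns with $j<(p-1)/2$ to vanish. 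Hence only columns $(p-1)/2$ and $(p+1)/2$ survive, and these two are proportional.

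Reading off which coefficients these two columns collect shows that the nonzero terms of $f(x)^{(p-1)/2}$ occur only in degrees $\equiv \tfrac{p-1}{2}$, $\tfrac{p+1}{2}$, or $0 \pmod p$; the proportionality of the surviving columns then gives, after extracting $p$-th powers over the perfect field $k$,
\[
f(x)^{(p-1)/2} = x^{(p-1)/2}\tilde{P}(x)^p\,(1+\mu^p x) + \tilde{R}(x)^p
\]
for some $\mu\in k$ and polynomials $\tilde{P},\tilde{R}$. The essential obstacle is the term $\tilde{R}(x)^p$: the coefficients of $f(x)^{(p-1)/2}$ in degrees divisible by $p$ are precisely the entries that never appear in the Cartier--Manin matrix, so the rank hypothesis says nothing about them and they cannot simply be dropped.

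The device that neutralizes $\tilde{R}$ is differentiation: in characteristic $p$ both $\tilde{R}(x)^p$ and the factor $\tilde{P}(x)^p$ have vanishing derivative, so differentiating the displayed identity and using $\tfrac{p+1}{p-1}\equiv -1 \pmod p$ yields the clean relation
\[
f^{(p-3)/2}f' = x^{(p-3)/2}\tilde{P}(x)^p\,(1-\mu^p x),
\]
equivalently $f'/f = x^{(p-3)/2}\tilde{P}(x)^p(1-\mu^p x)/f^{(p-1)/2}$, in which $\tilde{R}$ has disappeared. I would then run a pole/multiplicity count: the left side $f'/f=\tfrac1x+\sum_i \tfrac{m_i}{x-\rho_i}$ has only simple poles, while at each root $\rho_i$ of $f$ the factor $\tilde{P}(x)^p$ contributes an order divisible by $p$ and $1-\mu^p x$ contributes order $1$ at a single point. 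Matching orders forces every root $\rho_i\neq 0$ with $p\nmid m_i$ to satisfy $m_i\equiv p-2 \pmod p$, while roots with $p\mid m_i$ do not appear as poles at all; combined with $\sum_i m_i = 2g = 2(p-1)$ this pins the factorization down to
\[
f(x) = x\,(x-a)^{p-2}(x-b)^p,
\]
allowing the degeneration $a=b$. Crucially only the coarse fact that $\tilde{P}(x)^p$ is a $p$-th power is used, so the apparent circularity that $\tilde{P}$ itself depends on $f$ never arises.

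Finally I would expand this form. Since $(x-b)^p=x^p-b^p$, the polynomial depends only on the two parameters $a$ and $b^p$, and reading off two coefficients gives $c_{2g}=2a$ and $c_g=-b^p$; hence $a=c_{2g}/2$ and $b^p=-c_g$, so that
\[
f(x)=x\bigl(x-\tfrac{c_{2g}}{2}\bigr)^{p-2}\bigl(x^p+c_g\bigr)\in k[x,c_g,c_{2g}]\subseteq k[x,c_g,c_{2g-g/2},c_{2g}],
\]
which proves the proposition, in fact in the sharper form showing $c_{2g-g/2}$ is redundant and matching the explicit $p=5,7,11$ polynomials recorded above. I expect the multiplicity bookkeeping in the third step to be the main technical hurdle, since one must rule out stray irreducible factors and track the roots of $1-\mu^p x$ and of $\tilde{P}$; but the conceptual crux is the recognition that differentiation is exactly what controls the degree-divisible-by-$p$ coefficients that rank one leaves untouched.
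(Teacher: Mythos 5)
Your argument is correct in substance, but it is not the paper's route --- indeed the paper never writes out a proof of Proposition~\ref{thmp-1} at all: it only remarks that the proposition ``is proven using the same methods employed in Section~\ref{results},'' i.e.\ by assuming the Cartier--Manin matrix has rank one and chasing the relations this forces on the coefficients of $f$, and that coefficient-chasing style is presumably why its conclusion allows dependence on the three coefficients $c_g, c_{2g-g/2}, c_{2g}$. Your opening step --- locating $a_{1,(p+1)/2}=c_1^{(p-1)/2}$ and $a_{g,(p-1)/2}=1$ and killing every column except $j=(p-1)/2,(p+1)/2$ --- is exactly that method, transplanted from Case 2 of Theorem~\ref{mainthm}. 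Where you genuinely diverge is in handling the coefficients the matrix cannot see: for $g=p-1$ the indices $pi-j$ never hit the residue class $0 \bmod p$ (whereas for $g=p$ the column $j=p$ does see them, which is why Case 2 of Theorem~\ref{mainthm} needed no such device), and your differentiation trick, yielding $f^{(p-3)/2}f' = x^{(p-3)/2}\tilde P(x)^p(1-\mu^p x)$, eliminates both $\tilde R^p$ and the $p$-th power factor at one stroke (the sign is right, since $(p+1)/(p-1)\equiv -1 \pmod p$). The payoff is that your valuation count produces the explicit factorization $f = x\,(x-c_{2g}/2)^{p-2}(x^p+c_g)$, which agrees with the three examples the paper records for $p=5,7,11$, is strictly stronger than the proposition (it shows $c_{2g-g/2}$ is redundant), and --- being visibly non-squarefree --- would settle affirmatively the paper's conjecture that no smooth hyperelliptic curve with $a=g-1$ exists when $g=p-1$.

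One point needs repair before this counts as a complete proof: your assertion that every root $\rho_i$ with $p\nmid m_i$ satisfies $m_i\equiv p-2 \pmod p$ is not quite what the order count gives. Matching orders at such a root yields $\tfrac{p-1}{2}m_i - 1 = p\,v_{\rho_i}(\tilde P) + \epsilon_i$, where $\epsilon_i = 1$ exactly when $\rho_i = \mu^{-p}$; so the single root of $1-\mu^p x$, if it happens to lie on $f$, is permitted to have $m_i \equiv p-4 \pmod p$ instead. You flagged this bookkeeping as the hurdle, and it does close, but only via the sum constraint: the admissible multiplicities are $\{p-2,\,2p-2\}$ for $\epsilon=0$ roots, $\{p-4,\,2p-4\}$ for the at-most-one $\epsilon=1$ root, and $p$ for roots with $p\mid m_i$ (since $m_i\le 2p-2$ forces $m_i=p$), and for $p\ge 5$ the only multisets of these summing to $2g = 2p-2$ are $\{2p-2\}$ and $\{p-2,\,p\}$ --- precisely your claimed form, with the degeneration $a=b$ allowed. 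This enumeration is also exactly where the hypothesis $p>3$ enters: for $p=3$ one has $p-2=1$, so simple roots are admissible and the argument collapses, consistent with the existence of smooth genus-$2$ curves with $a=1$ in characteristic $3$. Write out that enumeration (and note that smoothness supplies the $c_1\neq 0$ used in the column elimination), and the proof is complete.
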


Thus, for a hyperelliptic curve $C$ with $a=g-1$ to exist when $g=p-1$, its affine equation \linebreak$y^2=f(x)$ must take on a very specific form; the polynomial $f(x)$ is completely determined by only three of its $2g$ coefficients. Proposition \ref{thmp-1} is proven using the same methods employed in Section \ref{results}, where the associated Cartier-Manin matrix is assumed to have rank 1, and the relationships forced on the coefficients of $f(x)$ are studied. 

As shown in Section \ref{peq7}, it seems possible that curves of genus 5 with $a=4$ do not exist in characteristic 7. It would be worth generating data for $p=11$ and $g=9$ to explore the existence of hyperelliptic curves with $a=8$. From there, an attempt could be made to make a general statement about the existence of hyperelliptic curves of genus $g=p-2$ and $a=g-1$ when $p>5$.

\section{Sage Code}
\label{Sagecode}

The following is an sample of some of the code used to obtain results discussed in Section \ref{peq7}. In both examples listed here, the returned output was $N=0$. \\

\begin{python}
F=GF(7)
R.<x>=PolynomialRing(F)
N=0
V=VectorSpace(F, 8)
for m in V:
    f=m[1]*x+m[2]*x^2+m[3]*x^3+m[4]*x^4+m[5]*x^5+m[6]*x^6+m[7]*\
    	x^7+m[0]*x^8+x^9
    if f.is_squarefree()==True:
    	C=HyperellipticCurve(f)
    	B=C.Cartier_matrix()
    	if B.rank()==1:
    	    N=N+1;
    	    C
N
\end{python}

\begin{python}
F=GF(49, 'a')
R.<x>=PolynomialRing(F)
N=0
for i in range(1000000):
    m=random_vector(F, 7)
    f=(x-1)*(m[0]*x+m[1]*x^2+m[2]*x^3+m[3]*x^4+m[4]*x^5+m[5]*x^6\
    	+m[6]*x^7+x^8
    if f.is_squarefree()==True:
    	C=HyperellipticCurve(f)
    	B=C.Cartier_matrix()
    	if B.rank()==1:
    	    N=N+1;
    	    C
N
\end{python}

\bibliographystyle{alpha}
\bibliography{article_final}

{\small {\bf \it E-mail: } sfrei@uoregon.edu }

\end{document}